\renewcommand{\email}[1]{\emailname: #1} 
\renewenvironment{proof}{\noindent{\itshape Proof.}}{\smartqed\qed}
\newcommand{\bsc}{{\boldsymbol{c}}}
\newcommand{\bsu}{{\boldsymbol{u}}}
\newcommand{\bsx}{{\boldsymbol{x}}}
\newcommand{\bsz}{{\boldsymbol{z}}}
\newcommand{\rd}{{\mathrm{d}}}
\newcommand{\bbE}{{\mathbb{E}}}
\DeclareSymbolFont{bbold}{U}{bbold}{m}{n}
\DeclareSymbolFontAlphabet{\mathbbold}{bbold}
\begin{document}

\title*{Quasi-Monte Carlo for an Integrand with a Singularity along a Diagonal in the Square}

\author{Kinjal Basu \and Art B. Owen}

\institute{
 Kinjal Basu
 \at LinkedIn Inc.\\
 \email{kbasu@linkedin.com}
 \and
 Art B.\ Owen
 \at Stanford University\\
 \email{owen@stanford.edu}
}

\maketitle

\index{Basu, K.}
\index{Owen, A. B.}  

\abstract{
Quasi-Monte Carlo methods are designed for integrands of bounded 
variation, and this excludes singular integrands.  Several methods 
are known for integrands that become singular on the boundary 
of the unit cube $[0,1]^d$ or at isolated possibly unknown points 
within $[0,1]^d$.  Here we consider functions on the 
square $[0,1]^2$ that may become singular as the point approaches 
the diagonal line $x_1=x_2$, and we study three quadrature
methods.
The first method splits the square into two triangles separated
by a region around the line of singularity, and applies
recently developed triangle QMC rules to the two triangular parts.
For functions with a singularity `no worse than $|x_1-x_2|^{-A}$ is'
for $0<A<1$ that method yields an error of 
$O( (\log(n)/n)^{(1-A)/2})$.  
We also consider methods extending the integrand into
a region containing the singularity and show that method
will not improve upon using two triangles. Finally, we consider
transforming the integrand to have a more QMC-friendly
singularity along the boundary of the square. This then
leads to error rates of $O(n^{-1+\epsilon+A})$ when combined with
some corner-avoiding Halton points or with randomized QMC
but it requires some stronger assumptions on the original singular integrand.
}

\section{Introduction}

Quasi-Monte Carlo (QMC)  integration is designed for integrands of bounded 
variation in the sense of Hardy and Krause (BVHK). 
Such integrands must necessarily be bounded.  Singular integrands 
cannot be BVHK; they cannot even be Riemann integrable. 
It is known 
since \cite{debr:post:1968} and \cite{bind:1970} that for any integrand  $f$ on $[0,1]^d$ that is not 
Riemann integrable, there exists a sequence $\bsx_i\in[0,1]^d$ for which 
the star discrepancy $D_n^*(\bsx_1,\dots,\bsx_n)\to 0$
as $n\to\infty$ while 
$(1/n)\sum_{i=1}^nf(\bsx_i)$ fails to converge to $\int_{[0,1]^d}f(\bsx)\rd\bsx$. 

We are interested in problems where the singularity arises
along a manifold in $[0,1]^d$. For motivation, see the engineering
applications by Mishra and Gupta in \cite{mishra2009application}
and several other papers.  Apart from a few remarks, we focus solely on the problem where there is
a singularity along the line $x_1=x_2$ in $[0,1]^2$.

It is possible for QMC integration to succeed on unbounded integrands. 
Sobol' \cite{sobo:1973} noticed this when colleagues used his methods on such 
problems. He explained it in terms of QMC points that avoid 
a hyperbolic region around the lower boundary of the unit cube 
where the integrands became singular. 
Klinger \cite{klin:1997} shows that Halton points and some digital nets 
avoid a cubical region around the origin. 
Halton points (after the zero'th)  avoid hyperbolic 
regions around the boundary faces of the unit  cube at a rate suitable to get error bounds for QMC
\cite{haltavoid}. 
Certain Kronecker sequences avoid hyperbolic 
regions around the boundary of the cube \cite{klin:1997:b}. 
In all of these examples, avoiding the singularity should be understood as using points that approach it, but not too quickly, as the number $n$ of function evaluations increases. 

For plain Monte Carlo, the location of the singularity is not important. 
One only needs to consider the first two moments of the integrand. 
Because QMC exploits mild smoothness of the integrand, the nature 
of the singularity matters.  
Reference \cite{owen2006quasi} considers randomized 
QMC (RQMC) methods for integrands with point singularities at unknown locations.
In RQMC, the integrand is evaluated at points that, individually, are uniformly 
distributed on $[0,1]^d$ and this already implies a singularity avoidance 
property via the Borel-Cantelli lemma. 
If $\int f(\bsx)^2\rd\bsx<\infty$ then 
scrambled nets yield an 
unbiased estimate of $\mu=\int f(\bsx)\rd\bsx$ with RMSE $o(n^{-1/2})$
\cite{owen96}.

The analyses in \cite{haltavoid} and \cite{owen2006quasi} 
employ an extension $\tilde f$  of $f$ from  a set
$K=K_n\subset[0,1]^d$ to $[0,1]^d$.  The extension satisfies 
$\tilde f(\bsx) =f(\bsx)$ for $\bsx\in K$. 
Now the quadrature error is 
\begin{align*}
\frac1n\sum_{i=1}^nf(\bsx_i) -\int_{[0,1]^d} f(\bsx) \rd\bsx
& = \frac1n\sum_{i=1}^nf(\bsx_i) -\frac1n\sum_{i=1}^n\tilde f(\bsx_i) \\
&\phantom{=} + \frac1n\sum_{i=1}^n\tilde f(\bsx_i) 
-\int_{[0,1]^d} \tilde f(\bsx)\rd\bsx\\
&\phantom{=} + 
\int_{[0,1]^d} \tilde f(\bsx)\rd\bsx 
-\int_{[0,1]^d} f(\bsx)\rd\bsx. 
\end{align*}
If all of the points satisfy $\bsx_i\in K$, then 
the first term drops out and we find that 
\begin{align*}
\biggl|\frac1n\sum_{i=1}^nf(\bsx_i) -\int_{[0,1]^d} f(\bsx)\rd\bsx\biggr|
\le 
\biggl|\frac1n\sum_{i=1}^n \tilde f(\bsx_i) -\int_{[0,1]^d} \tilde f(\bsx) \rd\bsx\biggr|
+ \int_{-K}|\tilde f(\bsx)-f(\bsx)|\rd\bsx,
\end{align*}
where $-K=[0,1]^d\setminus K$. 
The extension used in \cite{haltavoid} and \cite{owen2006quasi} 
is due to Sobol' \cite{sobo:1973}.
It is particularly well suited to a Koksma-Hlawka 
bound for the first term above as $\tilde f$ has low variation.

In our case, we can isolate the singularity 
in the set $\{\bsx\mid |x_1-x_2|<\epsilon\}$. 
A set $K\subset[0,1]^d$ is Sobol'-extensible to $[0,1]^d$
with anchor $\bsc$ if for every $\bsx\in K$ the rectangle 
$\prod_{j=1}^d[\min(x_j,c_j),\max(x_j,c_j)]\subset K$. 
In our case, the set 
$\{\bsx\mid |x_1-x_2|\ge\epsilon\}$ in which $f$ is bounded 
is not Sobol' extensible. The extension $\tilde f$
used in \cite{haltavoid} and \cite{owen2006quasi} 
cannot be defined for this problem.

Section~\ref{sec:back} presents a strategy of avoiding 
a region near the singularity and integrating over two 
triangular regions using the method from \cite{basu:owen:2015}. 
The error is then a sum of two quadrature errors and one 
truncation error. We consider functions where the singularity 
is not more severe than that in $|x_1-x_2|^{-A}$ where 
$0<A<1$. 
Section~\ref{sec:bounds} shows that the truncation error 
in this approach is $O(\epsilon^{-A})$ and the quadrature error 
is $O( \epsilon^{-A-1}\log(n)/n)$
using the points from~\cite{basu:owen:2015} and 
a Koksma-Hlawka bound from \cite{bran:colz:giga:trav:2013}. 
The result is that we can attain a much better quadrature error 
bound of $O( (\log(n)/n)^{(1-A)/2})$. 
Section~\ref{sec:alt} shows that an approach 
based on finding an extension $\tilde f$ of $f$ would not 
yield a better rate for this problem. 
Section~\ref{sec:trans} transforms the problem so that each
triangular region becomes the image of a unit square, with
the singularity now on the boundary of the square.  
The singularity may be too severe for QMC. However, with
an additional assumption on the nature of the singularity
it is possible to attain a quadrature error of $O(n^{-1+\epsilon+A})$.
Section~\ref{sec:disc} summarizes the findings
and relates them to QMC-friendliness as discussed by
several authors, including Ian Sloan in his work with Xiaoqun Wang.

\section{Background}\label{sec:back}

In the context of a Festschrift for Ian Sloan, we presume that 
the reader is familiar with quasi-Monte Carlo, discrepancy 
and variation.  Modern approaches to QMC and discrepancy 
are covered in \cite{dick:pill:2010}. 
See \cite{variation} for an outline of variation for QMC including 
variation in the senses of Vitali and of Hardy and Krause. 

We will use a notion of functions that are singular but not
too badly singular.  
\begin{definition}\label{def:okdiagsing}
The function $f$ defined on $[0,1]^2$ has a diagonal singularity 
no worse than $|x_1-x_2|^{-A}$ for $0<A<1$, if
\begin{equation}\label{eq:regu}
\begin{split}
|f(\bsx)| & \le B|x_1-x_2|^{-A}\\
\biggl|\frac{\partial f(\bsx)}{\partial x_j}\biggr| & \le B|x_1-x_2|^{-A-1},\quad j\in\{1,2\},\quad\text{and}\\
\biggl|\frac{\partial^2 f(\bsx)}{\partial x_j\partial x_k}\biggr| & \le B|x_1-x_2|^{-A-2},\quad j,k\in\{1,2\}
\end{split}
\end{equation}
all hold for some $B<\infty$.
\end{definition}
We take $A>0$ in order to allow a singularity and $A<1$ because 
$f$ must be integrable. 
Smaller values of $A$ describe easier cases to handle. 
The value of $A$ to use for a given integrand may be evident from 
its analytical form.  If $A<1/2$ then $f^2$ is integrable. 
Definition~\ref{def:okdiagsing} is modeled on some previous notions:
\begin{definition}\label{def:oklowersing}
The function $f(\bsx)$ defined on $[0,1]^d$ has a lower edge singularity no worse 
than $\prod_{j=1}^d x_j^{-A_j}$, for constants $0<A_j<1$, if
$$
|\partial^u f(\bsx)| \le B\prod_{j=1}^d x_j^{-A_j-1_{j\in u}},
$$
holds for some $B<\infty$ and all $u\subseteq\{1,2,\dots,d\}$. 
\end{definition}
\begin{definition}
\label{def:okpointsing}
The function $f(\bsx)$ defined on $[0,1]^d$ has a point singularity no worse 
than $\Vert \bsx-\bsz\Vert^{-A}$, for $\bsz\in[0,1]^d$,  if 
$$
|\partial^u f(\bsx)| \le B\Vert\bsx-\bsz\Vert^{-A-|u|}
$$
holds for some $B<\infty$ and all $u\subseteq\{1,2,\dots,d\}$. 
\end{definition}
Definition~\ref{def:oklowersing} is one of several conditions in \cite{haltavoid} 
for singularities that arise as $\bsx$ approaches the boundary of the unit cube.
Definition~\ref{def:okpointsing} is used in \cite{owen2006quasi}
for isolated point singularities.
Definition~\ref{def:okdiagsing} is more stringent than 
Definitions~\ref{def:oklowersing} and \ref{def:okpointsing} are,
because it imposes a constraint on partial derivatives taken twice with respect
to $x_1$ or $x_2$.

To estimate $\mu =\int_{[0,1]^2} f(\bsx)\rd\bsx$ 
we will sample points $\bsx_i\in[0,1]^2$. 
The points we use will avoid a region near the singularity by sampling only within 
$$
S_\epsilon = \bigl\{ \bsx\in [0,1]^2\bigm| |x_1-x_2|\ge\epsilon\bigl\}
$$
where $0<\epsilon<1$. 
The set $S_\epsilon$ is the union of two disjoint triangles: 
\begin{align*}
T_\epsilon^u &= \bigl\{ \bsx\in[0,1]^2 \bigm| x_2\ge x_1+\epsilon\bigr\},\quad\text{and}\\
T_\epsilon^d &= \bigl\{ \bsx\in[0,1]^2 \bigm| x_2\le x_1-\epsilon\bigr\}.
\end{align*}
We let $-S_\epsilon$ denote the set $[0,1]^2\setminus S_\epsilon$. 
As remarked in the introduction, the set $T_u\cup T_d$
is not Sobol' extensible to $[0,1]^2$.

We will choose points $\bsx_{i,u}\in T_\epsilon^u$ for $i=1,\dots,n$ and estimate 
$\mu_{\epsilon,u}=\int_{T_\epsilon^u}f(\bsx)\rd\bsx$ by 
$$\hat\mu_{\epsilon,u}=\frac{{\mathrm{vol}}(T_\epsilon^u)}n \sum_{i=1}^nf(\bsx_{i,u}).$$
Using a similar estimate for $T_\epsilon^d$ we arrive at our estimate of $\mu$,
\begin{align*}
\hat\mu_{\epsilon} =\hat\mu_{\epsilon,u}+\hat\mu_{\epsilon,d}. 
\end{align*}
Our error then consists of two quadrature errors and a truncation 
error and it satisfies the bound 
\begin{align}\label{eq:theerr}
|\hat\mu_\epsilon -\mu| \le 
\biggl|\hat\mu_{\epsilon,u}-\int_{T_\epsilon^u}f(\bsx)\rd\bsx\biggr|
+\biggl|\hat\mu_{\epsilon,d}-\int_{T_\epsilon^d}f(\bsx)\rd\bsx\biggr|
+ \biggl|\int_{ 
-S_\epsilon}f(\bsx)\rd\bsx\biggr|. 
\end{align}

\section{Error bounds}\label{sec:bounds}

We show in Proposition~\ref{prop:trunc} below that 
the truncation error bound 
$|\int_{-S_\epsilon}f(\bsx)\rd\bsx|$ is $O(\epsilon^{1-A})$ as $\epsilon\to 0$. 
We will use the construction from~\cite{basu:owen:2015} and 
the Koksma-Hlawka inequality from \cite{bran:colz:giga:trav:2013}
to provide an upper bound for the 
integration error over $T_\epsilon^u$.  That bound grows as $\epsilon\to0$
and so to trade them off we will tune the way $\epsilon$ depends on~$n$. 

\begin{proposition}\label{prop:trunc}
Under the regularity conditions~\eqref{eq:regu},
$$
\biggl|\int_{-S_\epsilon} f(\bsx)\rd\bsx\biggr| \le  \frac{2B\epsilon^{1-A}}{1-A}. 
$$
\end{proposition}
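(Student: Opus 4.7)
The plan is to reduce the bound to a one-dimensional integral by Fubini after applying the pointwise bound from~\eqref{eq:regu}. Specifically, note that
$$
-S_\epsilon = \{\bsx\in[0,1]^2 : |x_1-x_2|<\epsilon\}
$$
is a strip of width $2\epsilon$ around the diagonal. Using $|f(\bsx)|\le B|x_1-x_2|^{-A}$, it suffices to bound
$$
\int_{-S_\epsilon} |x_1-x_2|^{-A}\rd\bsx.
$$

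First, I would fix $x_1\in[0,1]$ and integrate over $x_2$. For each such $x_1$, the set of admissible $x_2$ is $[\max(0,x_1-\epsilon),\min(1,x_1+\epsilon)]$, an interval contained in $[x_1-\epsilon,x_1+\epsilon]$. Enlarging the interval only increases the integral of the non-negative integrand, and we get
$$
\int_{\max(0,x_1-\epsilon)}^{\min(1,x_1+\epsilon)} |x_1-x_2|^{-A}\rd x_2 \le \int_{x_1-\epsilon}^{x_1+\epsilon} |x_1-x_2|^{-A}\rd x_2 = \frac{2\epsilon^{1-A}}{1-A},
$$
using that $A<1$ makes the antiderivative $|x_1-x_2|^{1-A}/(1-A)$ well-defined at the diagonal.

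Next, I would integrate the resulting constant bound over $x_1\in[0,1]$, which simply reproduces the factor $2\epsilon^{1-A}/(1-A)$. Multiplying by $B$ gives the claimed inequality.

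There is really no obstacle here: the only small care needed is the truncation of the inner integration interval at the edges of $[0,1]$, which I sidestep by enlarging the range of integration before applying the explicit antiderivative. The condition $A<1$ (rather than $A\le 1$) is exactly what makes the one-dimensional singular integral finite.
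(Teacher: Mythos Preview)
Your proof is correct and essentially identical to the paper's: both take the absolute value inside, apply the pointwise bound $|f(\bsx)|\le B|x_1-x_2|^{-A}$, enlarge the inner integration range to the full strip of half-width $\epsilon$, and evaluate the resulting one-dimensional integral $2\int_0^\epsilon t^{-A}\rd t = 2\epsilon^{1-A}/(1-A)$. The paper's writeup is just a slightly more compressed version of your argument.
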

\begin{proof}
We take the absolute value inside the integral and obtain 
\begin{align*}
\int_{-S_\epsilon} |f(\bsx)|\rd\bsx\le 
\int_{-S_\epsilon} B|x_1-x_2|^{-A}\rd\bsx 
 \le B\int_0^12\int_0^\epsilon x_2^{-A}\rd x_2\rd x_1 
\end{align*}
from which the conclusion follows. 
\end{proof}

Next we turn to the quadrature errors over $T_\epsilon^u$. Of course, $T_\epsilon^d$ is similar. 
The Koksma-Hlawka bound in 
\cite{bran:colz:giga:trav:2013}
 has 
$$
|\hat\mu_{\epsilon,u} - \mu_{\epsilon,u} | \le D_{T^u_\epsilon}^*(\bsx_{1,u},\dots,\bsx_{n,u}) V_{T^u_\epsilon}(f) 
$$
where $D_{T_{u,\epsilon}}^*$ and $V_{T^u_\epsilon}$ are measures of discrepancy and variation suited to the triangle. 
Basu and Owen \cite{basu:owen:2015}
provide a construction in which $D_{T_\epsilon^u}^* = O(\log(n)/n)$, the best possible rate. 

Brandolini et al.\ \cite[p. 46]{bran:colz:giga:trav:2013}
provide a bound for $V_{T_\epsilon^u}$, the variation on the simplex 
as specialized to the triangle. 
To translate their bound into our setting, we introduce 
the notation $f_{rs} = \partial^{r+s}f/\partial^rx_1\partial^sx_2$. 
Specializing their bound to the domain $T^u_\epsilon$ we find that the variation is 
{\allowdisplaybreaks
\begin{align}\label{eq:vorder} 
O\Bigl( & |f(0,1)| + |f(0,\epsilon)|+ |f(1-\epsilon,1)|\notag\\
&+\int_{\epsilon}^1|f(0,x_2)|\rd x_2 +\int_0^{1-\epsilon}|f(x_1,1)|\rd x_1 + \int_0^{1-\epsilon} |f(x_1,x_1+\epsilon)|\rd x_1\notag\\
&+\int_{\epsilon}^1|f_{01}(0,x_2)|\rd x_2 +\int_0^{1-\epsilon}|f_{10}(x_1,1)|\rd x_1 \\
& + \int_0^{1-\epsilon} |f_{10}(x_1,x_1+\epsilon)|\rd x_1 + \int_0^{1-\epsilon} |f_{01}(x_1,x_1+\epsilon)|\rd x_1\notag\\
& + \int_{T_\epsilon^u}|f(\bsx)|+|f_{01}(\bsx)|+|f_{10}(\bsx)|+|f_{20}(\bsx)|+|f_{02}(\bsx)|+|f_{11}(\bsx)| \rd\bsx\Bigr) \notag 
\end{align} 
as $\epsilon\to0$. The implied constant in~\eqref{eq:vorder} includes their unknown constant $C_2$, 
the reciprocals of  edge lengths 
of $T_{\epsilon}^u$, the reciprocal of the area of $T_\epsilon^u$, some small integers and some factors 
involving $\sqrt{2}(1-\epsilon)$, the length of the hypotenuse of $T_\epsilon^u$. 
}

\begin{proposition}\label{prop:quad}
Let $f$ satisfy the regularity condition~\eqref{eq:regu}. Then the trapezoidal variation of 
$f$ over $T_\epsilon^u$ satisfies 
$$
V_{T_\epsilon^u}(f) = O(\epsilon^{-A-1}) 
$$
as $\epsilon\to0$. 
\end{proposition}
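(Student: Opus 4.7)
The plan is to estimate each of the eleven terms appearing in the variation bound~\eqref{eq:vorder} separately, using the pointwise bounds in~\eqref{eq:regu}, and then identify the term(s) that dominate as $\epsilon\to 0$. The geometric prefactors absorbed into the big-$O$ in~\eqref{eq:vorder} (reciprocal edge lengths, reciprocal area of $T_\epsilon^u$, and the hypotenuse length $\sqrt{2}(1-\epsilon)$) all approach positive finite limits as $\epsilon\to 0$, so they contribute nothing to the rate.

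First I would dispatch the vertex and edge terms. The three vertex values satisfy $|f(0,1)|\le B$ while $|f(0,\epsilon)|$ and $|f(1-\epsilon,1)|$ are each at most $B\epsilon^{-A}$, all smaller than $\epsilon^{-A-1}$. On the axis-parallel edges $x_1=0$ and $x_2=1$, the singularity factor $|x_1-x_2|^{-A-k}$ reduces to $x_2^{-A-k}$ or $(1-x_1)^{-A-k}$, and a direct calculation gives
\begin{align*}
\int_\epsilon^1 t^{-A-k}\rd t =
\begin{cases}
O(1), & k=0,\\
O(\epsilon^{-A}), & k=1.
\end{cases}
\end{align*}
On the diagonal-parallel edge $x_2=x_1+\epsilon$, the factor is frozen at $\epsilon^{-A-k}$, so the corresponding line integrals in~\eqref{eq:vorder} evaluate to $O(\epsilon^{-A-k})$; in particular the two first-partial terms contribute $O(\epsilon^{-A-1})$.

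For the double integrals over $T_\epsilon^u$, the natural change of variables is $u=x_2-x_1$, $v=x_1$, with unit Jacobian, which maps $T_\epsilon^u$ onto $\{(v,u)\mid \epsilon\le u\le 1,\ 0\le v\le 1-u\}$. Under this substitution one finds
\begin{align*}
\int_{T_\epsilon^u} |x_1-x_2|^{-A-k}\rd\bsx = \int_\epsilon^1 (1-u)\, u^{-A-k}\rd u,
\end{align*}
which is $O(1)$, $O(\epsilon^{-A})$, and $O(\epsilon^{-A-1})$ for $k=0,1,2$ respectively. Combining this with~\eqref{eq:regu}, the double integrals of the second partials $f_{20}$, $f_{02}$, and $f_{11}$ each contribute $O(\epsilon^{-A-1})$, matching the contribution from the first-partial line integrals along $x_2=x_1+\epsilon$. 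All remaining terms are of strictly smaller order, so summing yields the claimed $V_{T_\epsilon^u}(f)=O(\epsilon^{-A-1})$.

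There is no deep obstacle here; the proof is essentially careful bookkeeping across the eleven summands in~\eqref{eq:vorder}. The most error-prone step will be confirming that the geometric constants hidden in the big-$O$ of~\eqref{eq:vorder} really are bounded uniformly as $\epsilon\to 0$, and that the dominant rate comes from the hypotenuse-parallel edge and the second-partial double integrals rather than being buried in one of the other terms. A minor point is that~\eqref{eq:regu} need only be applied on the closed triangle $T_\epsilon^u$, where $|x_1-x_2|\ge\epsilon>0$, so all the bounds are finite and the changes of variables are legitimate.
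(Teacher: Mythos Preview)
Your proposal is correct and follows essentially the same approach as the paper: both arguments bound each summand in~\eqref{eq:vorder} term-by-term via the regularity conditions~\eqref{eq:regu}, group them by growth rate, and identify the $O(\epsilon^{-A-1})$ contributions from the first-partial line integrals along the hypotenuse-parallel edge and the second-partial double integrals as dominant. Your explicit change of variables $u=x_2-x_1$, $v=x_1$ for the double integral and your remark on the uniform boundedness of the geometric prefactors are slightly more detailed than the paper's treatment, but the structure and content of the argument are the same.
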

\begin{proof}
Under condition~\eqref{eq:regu},
$$|f(0,1)|  
+ \int_\epsilon^1|f(0,x_2)|\rd x_2 
+ \int_0^{1-\epsilon}|f(x_1,1)|\rd x_1 
+\int_{T_\epsilon^u}|f(\bsx)|= O(1).$$
Next 
$$|f(0,\epsilon)| +|f(1-\epsilon,1)| +\int_0^{1-\epsilon}|f(x_1,x_1+\epsilon)|\rd x_1 
= O(\epsilon^{-A})$$ and 
$$\int_\epsilon^1|f_{01}(0,x_2)|\rd x_2 +\int_0^{1-\epsilon}|f_{10}(x_1,1)|\rd x_1 
= O(\epsilon^{-A})$$
as well. 
Continuing through the terms, we find that 
$$
\int_0^{1-\epsilon} |f_{10}(x_1,x_1+\epsilon)|\rd x_1 + \int_0^{1-\epsilon} |f_{01}(x_1,x_1+\epsilon)|\rd x_1 = O(\epsilon^{-A-1}). 
$$
The remaining terms are integrals of absolute partial derivatives of $f$ over $T_\epsilon^u$.  They 
are dominated by integrals of second derivatives and those terms obey the bound 
$$
\int_0^{1-\epsilon}\int_{x_1+\epsilon}^1B_2|x_1-x_2|^{-A-2}\rd x_2\rd x_1 = O(\epsilon^{-A-1}).
$$
\end{proof}

\begin{theorem}\label{thm:therate}
Under the regularity conditions~\eqref{eq:regu}, 
we may choose 
$\epsilon \propto \sqrt{\log(n)/n}$ and get 
\begin{align}\label{eq:rate}
|\hat\mu -\mu| = O\Bigl( \Bigl(\frac{\log(n)}{n}\Bigr)^{(1-A)/2}\Bigr). 
\end{align}
\end{theorem}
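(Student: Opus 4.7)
The plan is to assemble the three ingredients already prepared in the excerpt---the error decomposition \eqref{eq:theerr}, the truncation bound of Proposition~\ref{prop:trunc}, and the variation bound of Proposition~\ref{prop:quad}---and then pick $\epsilon$ as a function of $n$ to balance the two competing rates. There is no new analysis required beyond a Koksma-Hlawka step and a one-line optimization.

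First I would bound the truncation term in \eqref{eq:theerr} directly by Proposition~\ref{prop:trunc}, getting $O(\epsilon^{1-A})$. For each of the two triangular quadrature errors I would invoke the triangle Koksma-Hlawka inequality from~\cite{bran:colz:giga:trav:2013},
$$
|\hat\mu_{\epsilon,u}-\mu_{\epsilon,u}|\;\le\; D^*_{T_\epsilon^u}(\bsx_{1,u},\dots,\bsx_{n,u})\,V_{T_\epsilon^u}(f),
$$
and then substitute the construction from~\cite{basu:owen:2015}, which supplies $n$ points with $D^*_{T_\epsilon^u}=O(\log(n)/n)$, together with $V_{T_\epsilon^u}(f)=O(\epsilon^{-A-1})$ from Proposition~\ref{prop:quad}. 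This yields $O(\epsilon^{-A-1}\log(n)/n)$ on $T_\epsilon^u$, and the same bound on $T_\epsilon^d$ by symmetry. Plugging everything back into~\eqref{eq:theerr} gives
$$
|\hat\mu_\epsilon-\mu|\;=\;O\bigl(\epsilon^{1-A}\bigr)+O\!\left(\frac{\log(n)}{n}\,\epsilon^{-A-1}\right).
$$

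Setting the two terms equal yields $\epsilon^{2}\asymp \log(n)/n$, i.e.\ $\epsilon\propto\sqrt{\log(n)/n}$, and with this choice both terms evaluate to $O((\log(n)/n)^{(1-A)/2})$, which is \eqref{eq:rate}.

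The main obstacle is really a bookkeeping check rather than a substantive step: one must verify that the implicit constants in the Basu-Owen triangle discrepancy bound and in the Brandolini et al.\ Koksma-Hlawka inequality do not themselves blow up as $\epsilon\to 0$. The potentially dangerous factors listed after~\eqref{eq:vorder}---reciprocals of the edge lengths and of the area of $T_\epsilon^u$, and factors involving the hypotenuse $\sqrt{2}(1-\epsilon)$---are all bounded for $\epsilon\le 1/2$, say, since the legs have length $1-\epsilon$ and the area is $(1-\epsilon)^2/2$. Once these geometric constants are absorbed into the $O(\cdot)$ notation, the $\epsilon$-dependence is entirely captured by Propositions~\ref{prop:trunc} and~\ref{prop:quad}, and the balancing argument gives the stated rate.
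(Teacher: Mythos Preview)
Your proposal is correct and follows essentially the same route as the paper: combine Propositions~\ref{prop:trunc} and~\ref{prop:quad} with the triangle Koksma--Hlawka bound and the $O(\log(n)/n)$ discrepancy construction to obtain $|\hat\mu-\mu|=O(\epsilon^{1-A}+\epsilon^{-1-A}\log(n)/n)$, then balance. Your extra paragraph verifying that the geometric constants in~\eqref{eq:vorder} stay bounded as $\epsilon\to 0$ is a sensible bookkeeping check that the paper leaves implicit.
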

\begin{proof}
From Propositions \ref{prop:trunc} and~\ref{prop:quad}
we get 
$$|\hat \mu-\mu| = O 
\Bigl(\epsilon^{1-A} + \frac{\log(n)}{n}\epsilon^{-1-A}\Bigr).$$
Taking $\epsilon$ to be a positive multiple of $\sqrt{\log(n)/n}$ yields 
the result. 
\end{proof}

The choice of $\epsilon\propto \sqrt{\log(n)/n}$ optimizes the upper 
bound in~\eqref{eq:rate}. 

\section{Extension based approaches}\label{sec:alt}

Another approach to this problem is to construct a 
function $\tilde f$ where 
$\tilde f(\bsx) = f(\bsx)$ for $\bsx\in S_\epsilon$
and apply QMC to $\tilde f$. 
The function $\tilde f$ can smoothly bridge the 
gap between $T_\epsilon^u$ and $T_\epsilon^d$. 
With such a function, the quadrature error satisfies 
\begin{align}\label{eq:extbound}
\biggl| \frac1n\sum_{i=1}^n\tilde f(\bsx_i) - \int_{[0,1]^2}f(\bsx)\rd\bsx\biggr|
\le 
D_n^*(\bsx_1,\dots,\bsx_n)V_{{\mathrm HK}}(\tilde f) 
+ \int_{-S_\epsilon}|f(\bsx)-\tilde f(\bsx)|\rd\bsx 
\end{align}
where $V_{{\mathrm HK}}$ is total variation in the sense of Hardy and Krause. 

Our regularity condition~\eqref{eq:regu} allows for $f$
to take the value $\epsilon^{-A}$ along the line $x_2= x_1-\epsilon$
and to take the value $-\epsilon^{-A}$ along $x_2=x_1+\epsilon$. 
By placing squares of side $2\epsilon$ along the main diagonal 
we then find that the Vitali variation of an extension $\tilde f$ is at 
least $\lfloor (2\epsilon)^{-1}\rfloor 2\epsilon^{-A}\sim \epsilon^{-1-A}$. 
Therefore the Hardy-Krause variation of $\tilde f$ grows at least this quickly 
for some of the functions $f$ that satisfy~\eqref{eq:regu}. 
More generally, for singular functions along a linear manifold $M$
within $[0,1]^d$, and no worse than  $\mathrm{dist}(\bsx,M)^{-A}$,
an extension over the region within $\epsilon$ of $M$
could have a variation lower bound growing as fast as $\epsilon^{-(d-1)-A}$. 

This result is much less favorable than the one for isolated 
point singularities 
\cite{owen2006quasi}.  For integrands on $[0,1]^d$ no worse than $\Vert \bsx-\bsz\Vert^{-A}$,
where $\bsz\in[0,1]^d$,
Sobol's low variation extension yields a function $\tilde f$
that agrees with $f$ for $\Vert\bsx-\bsz\Vert\ge \epsilon>0$
having $V_{\mathrm HK}(\tilde f) = O(\epsilon^{-A})$. Here we see that no 
extension can have such low variation
for this type of singularity.

Owen \cite{haltavoid} considers functions with singularities 
along the lower boundary of $[0,1]^d$ that are no worse than $\prod_{j=1}^dx_j^{-A_j}$. 
Sobol's extension from the region where $\prod_jx_j\ge \epsilon$ has 
variation $O(\epsilon^{-\max A_j})$ when the $A_j$ are distinct (otherwise 
logarithmic factors enter).  So that problem with singularities along 
the boundary also has a more accurate 
extension than can be obtained for singularities along the diagonal. 

No extension $\tilde f$ from $S_\epsilon$ to $[0,1]^2$ can yield 
a bound~\eqref{eq:extbound} with a better rate 
than $O( (\log n/n)^{(1-A)/2})$.  To show this we first clarify 
one of the rules we impose on extensions.  When we extend
$f$ from $\bsx\in S$ to values of $\bsx\not\in S$ we do
not allow the construction of $\tilde f$ to depend on
$f(\bsx)$ for $\bsx\not\in S$.  That is, we cannot peek outside
the set we are extending from.  Some such rule must be necessary
or we could trivially get $0$ error from an extension based on
an oracle that uses the value of $\mu$ to define $\tilde f$.
With our rule, any two functions $f_1$ and $f_2$ with $f_1(\bsx)=f_2(\bsx)$
on $S_\epsilon$  have the same extension $\tilde f$.
From the triangle inequality,
$$\max_{j=1,2}\biggl(
\int_{-S_\epsilon} |\tilde f(\bsx)-f_j(\bsx)|\rd\bsx
\biggr) 
\ge \frac12\int_{-S_\epsilon}|f_1(\bsx)-f_2(\bsx)|\rd\bsx. 
$$
Now let 
\begin{align*}
f_1(\bsx) &= \begin{cases}
 -|x_1-x_2|^{-A}, & x_2-x_1>0\\
\phantom{-} |x_1-x_2|^{-A}, & x_2-x_1<0, 
\end{cases}
\end{align*}
and
\begin{align*}
f_2(\bsx) &= \begin{cases} 
|x_1-x_2|^{-A}, & \phantom{-\epsilon > }\,\,x_2-x_1>0\\
\phi(x_2-x_1), &  \phantom{-}0 > x_2-x_1 \ge -\epsilon\\
|x_1-x_2|^{-A}, & -\epsilon\,  >  x_2 -x_1, 
\end{cases}
\end{align*}
for a quadratic polynomial $\phi$
with $\phi(-\epsilon)=\epsilon^{-A}$,
 $\phi'(-\epsilon)=-A\epsilon^{-A-1}$,
and  $\phi''(-\epsilon)=A(A+1)\epsilon^{-A-2}$. 
Both $f_1$ and $f_2$ satisfy~\eqref{eq:regu}
and $\int_{-S_\epsilon} |f_1(\bsx)-f_2(\bsx)|\rd\bsx$ is 
larger than a constant times $\epsilon^{1-A}$. 
That is the same rate as the truncation error 
from Proposition~\ref{prop:trunc} and the 
quadrature error from this approach also 
attains the same rate as the error in Proposition~\ref{prop:quad}. 
As a result, we conclude that even if we could construct 
the best extension $\tilde f$, it would not lead to a bound 
with a better rate than the one in Theorem~\ref{thm:therate}.

\section{Transformation}\label{sec:trans}

Here we consider applying a change of variable
to move the singularity from the diagonal to an
edge of the unit square.
We focus on integrating
$f(\bsx)$ over 
$T^u=\{ (x_1,x_2)\in[0,1]^2\mid 0\le x_1\le x_2\le 1\}$
for $f$ with a singularity no worse than $|x_1-x_2|^{-A}$.
The same strategy and same convergence rate hold on
$T^d=\{ (x_1,x_2)\in[0,1]^2\mid 0\le x_2\le x_1\le 1\}$.
Using a standard change of variable we have
$$
\int_{T^u}f(\bsx)\rd\bsx 
= \frac12\int_0^1\int_0^1 f((1-u_1)\sqrt{u_2},\sqrt{u_2})\rd\bsu,
$$
which we then write as
$$
\frac12\int_{[0,1]^2}g(\bsu)\rd\bsu,\quad\text{for $g(\bsu)=f((1-u_1)\sqrt{u_2},\sqrt{u_2})$}.
$$
That is $g(\bsu) = f(\tau(\bsu))$ for a transformation $\tau:[0,1]^2\to T_u\subset[0,1]^2$ 
given by $\tau_1(\bsu)=(1-u_1)\sqrt{u_2}$ and $\tau_2(\bsu)=\sqrt{u_2}$.

The archetypal function with diagonal singularity satisfying
Definition~\ref{def:okdiagsing} is $f(\bsx) = |x_1-x_2|^{-A}$.
The corresponding function $g$ for this $f$ is
$$g(\bsu) = |\tau_1(\bsu)-\tau_2(\bsu)|^{-A}
= u_1^{-A}u_2^{-A/2}.
$$
We see that the change of variable has produced an integrand with a singularity
no worse than $u_1^{-A}u_2^{-A/2}$ according to  Definition~\ref{def:oklowersing}.
Taking $\bsu_i$
to be the Halton points leads to a quadrature 
error at rate $O(n^{-1+\epsilon+A})$ for any $\epsilon>0$,
because Halton points 
(after the zeroth one) avoid the origin at a suitable rate 
\cite [Corollary 5.6]{haltavoid}. 
For this integrand $g$, randomized quasi-Monte Carlo points for will attain 
the mean error rate 
$\bbE( |\hat \mu-\mu|) =O(n^{-1+\epsilon+A})$
as shown in Theorem 5.7 of \cite{haltavoid}.

We initially thought that the 
conversion from a diagonal singularity to a lower edge
singularity no worse than $u_1^{-A}u_2^{-A/2}$ 
would follow for other functions satisfying Definition~\ref{def:okdiagsing}.
Unfortunately, that is not necessarily the case.

Let $f$ be defined on $[0,1]^2$ with a diagonal singularity no
worse than $|x_1-x_2|^{-A}$ for $0<A<1$.
First,
$$
|g(\bsu)|
= |f((1-u_1)\sqrt{u_2},\sqrt{u_2})|
 \le B|u_1u_2^{1/2}|^{-A}
$$
which fits Definition~\ref{def:oklowersing}. Similarly,
\begin{align*}
g_{10}(\bsu)&
=f_{10}(\tau_1(\bsu),\tau_2(\bsu))\frac{\partial\tau_1(\bsu)}{\partial u_1}
=O(|\tau_1-\tau_2|^{-A-1})\times u_2^{1/2}=O(u_1^{-A-1}u_2^{-A/2}) 
\end{align*}
which also fits Definition~\ref{def:oklowersing}. However,
\begin{align}
g_{01}(\bsu)&
=f_{10}(\tau(\bsu))\frac{\partial\tau_1(\bsu)}{\partial u_2}
+f_{01}(\tau(\bsu))\frac{\partial\tau_2(\bsu)}{\partial u_2}\notag\\
&
=\bigl(f_{10}(\tau(\bsu))+f_{01}(\tau(\bsu))\bigr)
\frac12u_2^{-1/2}
-f_{10}(\tau(\bsu))\frac12u_1u_2^{-1/2}.\label{eq:g01parts}
\end{align}
Now $f_{10}$ and $f_{01}$ appearing in~\eqref{eq:g01parts}  are
both $O(u_1^{-A-1}u_2^{-A/2-1/2})$.  Therefore the two terms there are
$O(u_1^{-A-1}u_2^{-A/2-1})$ and $O(u_1^{-A}u_2^{-A/2-1})$ respectively.
The first term is too large by a factor of $u_1^{-1}$ to suit Definition~\ref{def:oklowersing}.
We would need $(f_{01}+f_{10})(\tau(\bsu))$ to be only $O( u_1^{-A}u_2^{-A/2-1/2})$.
Definition~\ref{def:okdiagsing} is also not strong enough for $g_{11}$
to be $O( u_1^{-A-1}u_2^{-A/2-1})$ as it would need to be under Definition~\ref{def:oklowersing}.  
That definition
yields only $O(u_1^{-A-2}u_2^{-A/2-1})$ without stronger assumptions.
Theorem~\ref{thm:modulated} below gives a sufficient condition
where $f$ is a modulated version of $|x_1-x_2|^{-A}$.

\begin{theorem}\label{thm:modulated}
Let $f(\bsx) = |x_1-x_2|^{-A}h(\bsx)$ for $\bsx\in[0,1]^2$ and $0<A<1$
where $h$ and its first two derivatives are bounded.
Then $g(\bsu) = f((1-u_1)\sqrt{u_2},\sqrt{u_2})$ satisfies
Definition~\ref{def:oklowersing} with $A_1=A$ and $A_2=A/2$.
\end{theorem}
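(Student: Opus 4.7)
The plan is to exploit the multiplicative structure $f(\bsx)=|x_1-x_2|^{-A}h(\bsx)$ by factoring $g$ in the same way. I would write $g(\bsu)=\rho(\bsu)\tilde h(\bsu)$ with $\rho(\bsu)=|\tau_1(\bsu)-\tau_2(\bsu)|^{-A}$ and $\tilde h(\bsu)=h(\tau(\bsu))$. The crucial computation is $\tau_1(\bsu)-\tau_2(\bsu)=-u_1\sqrt{u_2}$, giving $\rho(\bsu)=u_1^{-A}u_2^{-A/2}$ exactly. Reading $\rho_{10}$, $\rho_{01}$, $\rho_{11}$ off this closed form, one sees that $\rho$ alone already meets Definition~\ref{def:oklowersing} with $A_1=A$ and $A_2=A/2$, with the sharp exponents.

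Next I would bound the derivatives of $\tilde h$ up to mixed order two by the chain rule. The relevant partials of $\tau$ are $\partial\tau_1/\partial u_1=-\sqrt{u_2}$, $\partial\tau_2/\partial u_1=0$ and $\partial\tau_j/\partial u_2=O(u_2^{-1/2})$ for $j=1,2$. Combined with the assumed boundedness of $h$ and its first and second partials, these yield $|\tilde h|=O(1)$, $|\tilde h_{10}|=O(u_2^{1/2})$, $|\tilde h_{01}|=O(u_2^{-1/2})$ and $|\tilde h_{11}|=O(u_2^{-1/2})$. No negative power of $u_1$ ever appears in $\tilde h$ or its derivatives, which is where the smooth modulator does its work.

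Finally I would assemble the pieces via the Leibniz rule $g_{ij}=\sum_{k\le i,\,l\le j}\binom{i}{k}\binom{j}{l}\rho_{kl}\tilde h_{i-k,j-l}$ for $i,j\in\{0,1\}$, and verify in each of the four cases that the sum is dominated by $u_1^{-A-1_{i=1}}u_2^{-A/2-1_{j=1}}$. In every term the $\rho_{kl}$ factor supplies exactly the required $u_1$-power, while the $\tilde h$ factor contributes only $u_2^{\pm 1/2}$ and no negative $u_1$-power; excess positive powers of $u_2$ are absorbed using $u_2\le 1$. The only step that genuinely needs checking is $g_{11}$, where three cross terms $\rho_{10}\tilde h_{01}$, $\rho_{01}\tilde h_{10}$ and $\rho\tilde h_{11}$ all contribute; this is also the case in which Definition~\ref{def:okdiagsing} alone falls short, as the discussion before the theorem already exhibits. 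The modulated form $f=|x_1-x_2|^{-A}h$ is indispensable precisely because it forces the cancellation $(\partial_{x_1}+\partial_{x_2})|x_1-x_2|^{-A}=0$, which manifests as the benign $O(u_2^{-1/2})$ rate, rather than $O(u_2^{-1})$, for $\tilde h_{01}$.
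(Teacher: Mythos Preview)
Your proposal is correct and follows essentially the same route as the paper: both write $g(\bsu)=u_1^{-A}u_2^{-A/2}\,h(\tau(\bsu))$ and then differentiate the product, using boundedness of $h$ and its derivatives together with the explicit Jacobian of $\tau$. Your version is organized slightly more cleanly by naming the factors $\rho$ and $\tilde h$ and invoking the Leibniz rule, whereas the paper just expands $g_{10}$, $g_{01}$, $g_{11}$ directly; the computations and bounds are identical. One small quibble: your closing sentence attributes the benign $O(u_2^{-1/2})$ rate of $\tilde h_{01}$ to the cancellation $(\partial_{x_1}+\partial_{x_2})|x_1-x_2|^{-A}=0$, but that rate comes simply from $h$ having bounded first partials and $\partial\tau_j/\partial u_2=O(u_2^{-1/2})$; the cancellation is instead what makes the explicit $\rho$ factor already satisfy Definition~\ref{def:oklowersing} exactly, so that no negative $u_1$-power leaks from the singular part into the $u_2$-derivative.
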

\begin{proof}
We begin with
$$
g(\bsu) = u_1^{-A}u_2^{-A/2}h((1-u_1)u_2^{1/2},u_2^{1/2}) = O(u_1^{-1}u_2^{-A/2})
$$
by boundedness of $h$.
Next because $u_1$ is not in  the second argument to $h$,
\begin{align*} 
g_{10}(\bsu) 
&= -Au_1^{-A-1}u_2^{-A/2}h(\tau(\bsu))  
+u_1^{-A}u_2^{-A/2}  h_{10}(\tau(\bsu)) \partial \tau_1(\bsu)/\partial u_1\\
&= -Au_1^{-A-1}u_2^{-A/2}h(\tau(\bsu))  
   -u_1^{-A}u_2^{-A/2+1/2}h_{10}(\tau(\bsu))\\
& = O(u_1^{-A-1}u_2^{-A/2})  
\end{align*} 
as required.
Similarly,
\begin{align*}
g_{01}(\bsu) &= 
-(A/2)u_1^{-A}u_2^{-A/2-1}h(\tau(\bsu)) 
\\&\quad
+u_1^{-A}u_2^{-A/2}
\bigl( h_{10}(\tau(\bsu))(1-u_1)+h_{01}(\tau(\bsu))
\bigr) (1/2)u_2^{-1/2}\\
&= O(u_1^{-A}u_2^{-A/2-1})
\end{align*}
as required. Finally $g_{11}(\bsu)$ equals
\begin{align*}
&(A^2/2)u_1^{-A-1}u_2^{-A/2-1}h(\tau(\bsu))\\
&\quad -(A/2)u_1^{-A}u_2^{-A/2-1}h_{10}(\tau(\bsu))(-u_2^{1/2})\\
&\quad -(A/2)u_1^{-A-1}u_2^{-A/2-1/2}\bigl(h_{10}(\tau(\bsu))(1-u_1)+h_{01}(\tau(\bsu))\bigr)\\
&\quad +(u_1^{-A}u_2^{-A/2-1/2}/2)\bigl(
-h_{10}(\tau(\bsu)) +(1-u_1)h_{20}(\tau(\bsu))(-u_2^{1/2})
+h_{11}(\tau(\bsu))(-u_2^{1/2})\bigr)\\
&=O(u_1^{-A-1}u_2^{-A/2-1})
\end{align*}
as required.
\end{proof}

\section{Discussion}\label{sec:disc}

We find that for an integrand with a singularity 
`no worse than $|x_1-x_2|^{-A}$' along the line $x_1=x_2$
we can get a QMC estimate with error $O( (\log(n)/n)^{(1-A)/2})$
by splitting the square into two triangles and ignoring
a region in between them.
The same method applies to singularities along 
the other diagonal of $[0,1]^2$. Moreover,
the result extends to singularities along other lines 
intersecting the square.  One can partition the square 
into rectangles, of which one has the singularity along 
the diagonal while the others have no singularity, and 
then integrate $f$ over each of those rectangles. 

That result does not directly extend to 
singularities along a linear manifold in $[0,1]^d$
for $d\ge 3$. The reason is that the QMC result for 
integration in the triangle from \cite{basu:owen:2015} has 
not been extended to the simplex. In a personal communcation,
Dimitry Bilyk told us that such an extension would imply a 
counterexample to the Littlewood conjecture, which is widely 
believed to be true. 
Basu and Owen \cite{basu2015scrambled} present some algorithms for 
RQMC over simplices, but they come without a Koksma-Hlawka 
bound that would be required for limiting arguments using 
sequences of simplices.

The rate $O( (\log(n)/n)^{(1-A)/2})$ is a bit disappointing.  
We do much better by transforming the problem to place
the singularity along the boundary of a square region,
for then we can attain $O(n^{-1+\epsilon+A})$, under a stronger
assumption that $f$ is our prototypical singular function $|x_1-x_2|^{-A}$
possibly modulated by a function $h$ with bounded second derivatives on $[0,1]^2$.
As a result we find that there is something to be gained by
engineering QMC-friendly singularities in much the same way
that benefits of QMC-friendly discontinuities have been found
valuable by Wang and Sloan \cite{wang2011quasi}.

\begin{acknowledgement}
This work was supported by the US National Science Foundation under
grants DMS-1407397 and  DMS-1521145.
We thank two anonymous reviewers for helpful comments.
\end{acknowledgement}

%
%

\begin{thebibliography}{10}
\providecommand{\url}[1]{{#1}}
\providecommand{\urlprefix}{URL }
\expandafter\ifx\csname urlstyle\endcsname\relax
  \providecommand{\doi}[1]{DOI~\discretionary{}{}{}#1}\else
  \providecommand{\doi}{DOI~\discretionary{}{}{}\begingroup
  \urlstyle{rm}\Url}\fi

\bibitem{basu:owen:2015}
Basu, K., Owen, A.B.: Low discrepancy constructions in the triangle.
\newblock SIAM Journal on Numerical Analysis \textbf{53}(2), 743--761 (2015)

\bibitem{basu2015scrambled}
Basu, K., Owen, A.B.: Scrambled geometric net integration over general product
  spaces.
\newblock Foundations of Computational Mathematics pp. 1--30 (2015)

\bibitem{bind:1970}
Binder, C.: {\"Uber} einen {Satz} von de {Bruijn} und {Post}.
\newblock \"Osterreichische Akademie der Wissenschaften
  Mathematisch-Naturwissenschaftliche Klasse. Sitzungsberichte. Abteilung II
  \textbf{179}, 233--251 (1970)

\bibitem{bran:colz:giga:trav:2013}
Brandolini, L., Colzani, L., Gigante, G., Travaglini, G.: A {Koksma--Hlawka}
  inequality for simplices.
\newblock In: Trends in Harmonic Analysis, pp. 33--46. Springer (2013)

\bibitem{brandolini2013koksma}
Brandolini, L., Colzani, L., Gigante, G., Travaglini, G.: On the
  {Koksma--Hlawka} inequality.
\newblock Journal of Complexity \textbf{29}(2), 158--172 (2013)

\bibitem{debr:post:1968}
{de Bruijn}, N.G., Post, K.A.: A remark on uniformly distributed sequences and
  {Riemann} integrability.
\newblock Indagationes Mathematicae \textbf{30}, 149--150 (1968)

\bibitem{dick:pill:2010}
Dick, J., Pillichshammer, F.: Digital Sequences, Discrepancy and Quasi-{Monte
  Carlo} Integration.
\newblock Cambridge University Press, Cambridge (2010)

\bibitem{klin:1997:b}
Klinger, B.: Discrepancy of point sequences and numerical integration.
\newblock Ph.D. thesis, Technische Universit\"at Graz (1997)

\bibitem{klin:1997}
Klinger, B.: Numerical integration of singular integrands using low-discrepancy
  sequences.
\newblock Computing \textbf{59}, 223--236 (1997)

\bibitem{mishra2009application}
Mishra, M., Gupta, N.: Application of quasi {Monte Carlo} integration technique
  in {EM} scattering from finite cylinders.
\newblock Progress In Electromagnetics Research Letters \textbf{9}, 109--118
  (2009)

\bibitem{owen96}
Owen, A.B.: {Monte Carlo} variance of scrambled equidistribution quadrature.
\newblock SIAM Journal of Numerical Analysis \textbf{34}(5), 1884--1910 (1997)

\bibitem{variation}
Owen, A.B.: Multidimensional variation for quasi-{Monte Carlo}.
\newblock In: J.~Fan, G.~Li (eds.) International Conference on Statistics in
  honour of Professor Kai-Tai Fang's 65th birthday (2005)

\bibitem{haltavoid}
Owen, A.B.: Halton sequences avoid the origin.
\newblock SIAM Review \textbf{48}, 487--583 (2006)

\bibitem{owen2006quasi}
Owen, A.B.: {Quasi-Monte Carlo} for integrands with point singularities at
  unknown locations.
\newblock In: {Monte Carlo} and {Quasi-Monte Carlo} Methods 2004, pp. 403--417.
  Springer (2006)

\bibitem{sobo:1973}
Sobol', I.M.: Calculation of improper integrals using uniformly distributed
  sequences.
\newblock Soviet Mathematics Doklady \textbf{14}(3), 734--738 (1973)

\bibitem{wang2011quasi}
Wang, X., Sloan, I.H.: {Quasi-Monte Carlo} methods in financial engineering: An
  equivalence principle and dimension reduction.
\newblock Operations Research \textbf{59}(1), 80--95 (2011)

\end{thebibliography}

\end{document}